\begin{document}

\newtheorem {thm}{Theorem}[section]
\newtheorem{corr}[thm]{Corollary}
\newtheorem {alg}[thm]{Algorithm}
\newtheorem*{thmstar}{Theorem}
\newtheorem{prop}[thm]{Proposition}
\newtheorem*{propstar}{Proposition}
\newtheorem {lem}[thm]{Lemma}
\newtheorem*{lemstar}{Lemma}
\newtheorem{conj}[thm]{Conjecture}
\newtheorem*{conjstar}{Conjecture}
\theoremstyle{remark}
\newtheorem{rem}[thm]{Remark}
\newtheorem{np*}{Non-Proof}
\newtheorem*{remstar}{Remark}
\theoremstyle{definition}
\newtheorem{defn}[thm]{Definition}
\newtheorem*{defnstar}{Definition}
\newtheorem{exam}[thm]{Example}
\newtheorem{ques}[thm]{Question}
\newcommand{\pd}[2]{\frac{\partial #1}{\partial #2}}
\newcommand{\pdtwo}[2]{\frac{\partial^2 #1}{\partial #2^2}}
\def\Ind{\setbox0=\hbox{$x$}\kern\wd0\hbox to 0pt{\hss$\mid$\hss} \lower.9\ht0\hbox to 0pt{\hss$\smile$\hss}\kern\wd0}
\def\Notind{\setbox0=\hbox{$x$}\kern\wd0\hbox to 0pt{\mathchardef \nn=12854\hss$\nn$\kern1.4\wd0\hss}\hbox to 0pt{\hss$\mid$\hss}\lower.9\ht0 \hbox to 0pt{\hss$\smile$\hss}\kern\wd0}
\def\ind{\mathop{\mathpalette\Ind{}}}
\def\nind{\mathop{\mathpalette\Notind{}}} 
\newcommand{\m}{\mathbb }
\newcommand{\mc}{\mathcal }
\newcommand{\mf}{\mathfrak }
\title{Isogeny in Superstable Groups}
\author{James Freitag}

\address{Department of Mathematics, Statistics, and Computer Science\\
University of Illinois at Chicago\\
322 Science and Engineering Offices (M/C 249)\\
851 S. Morgan Street\\
Chicago, IL 60607-7045 }
\email{freitagj@gmail.com}
\maketitle
\begin{abstract}  We study and develop a notion of isogeny for superstable groups inspired by work of \cite{BerlineLascar}, \cite{CassidySinger} and \cite{Baudisch}. We prove several fundamental properties of the notion and then use it to formulate and prove uniqueness results. Connections to existing model theoretic notions are explained. 
\end{abstract}

The main goal of this note is to develop notions of \emph{strong connectedness}, \emph{almost simplicity} and \emph{isogeny} for the class of superstable groups, in analogy to the related notions for algebraic groups. Although we will not discuss \emph{supersimple} groups here, the results should can be generalized to that setting. In this paper, notions like simple, quasi-simple, and almost simple are group theoretic notions and have nothing to do with the similarly named model theoretic property of first order theories. We will then use the notion of isogeny to prove results of the form ``the construction is unique up to isogeny". For an example from algebraic groups, see the Jordan-H\"{o}lder theorem. The other guiding example will be the Cassidy-Singer analysis of differential algebraic groups. 

The methods here specialize to the known results in algebraic groups. The algebraic groups case also inspired the work of Cassidy and Singer in the differential setting. Both the algebraic and differential algebraic approaches inspire the work here.  Many of the proofs in this paper are translations of proofs from these cases, generalized and modified appropriately. One interesting note is that while $U$-rank specializes to (Krull) dimension in algebraic groups, the notion of dimension that Cassidy and Singer use in their analysis is not $U$-rank in differentially closed fields. Cassidy and Singer use the gauge of the differential algebraic group, that is the pair $(a_{\tau}, \tau),$ where $a_ \tau$ is the typical differential dimension and $\tau$ is the differential type. From these differential birational invariants, one can formulate an upper bound for Lascar rank in differential fields. There is no known lower bound for Lascar rank in terms of these invariants \cite{Suerthesis}. We will define similar notation in the superstable setting.

Strong results on the structure of infinite rank superstable groups were first established in \cite{BerlineLascar}. Further model theoretic analysis continued over the next several years and is recalled in \cite{Poizat}. Our purpose here is somewhat different from the existing model theoretic analysis. The basic notion we consider is \emph{isogeny}. The notion is interesting in its own right, and we prove several results about the properties of the condition. We hope to illustrate how to import techniques from differential algebraic groups into superstable groups, even when (as in this case) the results are not necessarily generalizations. This translation goes via thinking about Lascar rank in the way that differential algebraists think about the gauge of a differential algebraic group. Further, we hope this will lead to future work in model theory of fields with more general operators in which Lascar rank is either difficult to understand and calculate or is simply not available. The decomposition theorem proved here is close to the one proved by Baudisch \cite{Baudisch}. The quotients in our decomposition are almost simple and might have infinite centers; our decomposition is coarser than Baudisch's decomposition. Baudisch's paper does not mention the issue of uniqueness of the decomposition. The style and techniques for proving the decomposition theorem in this paper follow proofs of theorems from algebraic groups much more closely than then development contained in \cite{Baudisch}.

The work in this paper has been obviously influenced by that of Phyllis Cassidy and Michael Singer \cite{CassidySinger}. The former brought their work to the author's attention during a trip to the Kolchin seminar at CUNY. The author would also like to thank Dave Marker and John Baldwin for enlightening discussions. 

\section{Notation and Preliminaries}
Throughout this note, $G$ is a group definable in a superstable theory $T.$ We will heavily use the notion of \emph{Lascar rank} on types, denoted $RU(p).$ Though this is a rank on types, one can abuse notation and denote, by $RU(G)=RU(p_G)$, where $p_G$ is a generic type of $G.$ For certain technical reasons, this might be somewhat problematic when dealing with arbitrary definable sets, but not when dealing with (type-) definable groups. For this paper, we will assume that $\alpha$ and $\beta$ are ordinals such that $RU(G)=\omega^\alpha \cdot n + \beta$ where $\beta < \omega^\alpha$ (note that this is no restriction at all on the group $G$). Lascar rank ($U$-rank, $RU$) is the main tool used in this paper, and properly it is a rank on types. We abuse notation in a standard way and write $RU(X),$ where $X$ is a definable set (usually a group, in fact). In this case, the Lascar rank of the set is the supremum of the Lascar ranks of the complete types which include the formula $``x \in X.$

A group is called type-definable if it is an intersection of definable subgroups. We will be assuming standard notation from superstable group theory except where we define new notation. Poizat's Stable Groups \cite{Poizat} is suggested as a reference for the notation which is not explicitely defined. The reader is advised that we will make frequent use of the Lascar inequality in particular. We emphasize that we are working in some fixed superstable theory $T$, and are calculating Lascar rank within that theory. 
 
\begin{defn} Define $\tau_U: \{Def(G)\} \rightarrow On$ to be the highest power $\alpha,$ such that $\omega^\alpha$ appears in the Cantor normal form of the Lascar rank of definable set in question. $G$ is \emph{$\alpha$-connected} if for every type-definable normal subgroup $H$ of $G,$ $\tau_U(G/H)= \tau_U(G).$ 

We will also call $\alpha$-connected groups \emph{strongly connected}.  
$G$ is \emph{almost simple} if there is no type definable subgroup $H$ of $G,$ with $\tau _U (G/H)< \tau _U(G).$ 
\end{defn}
\begin{rem} This notation is not standard, but it is convenient for the purposes here. It is inspired by the notation of \cite{CassidySinger}. The definition of $\alpha$-connected agrees with that of \cite{BerlineLascar}. The following open question depends on the relationship between Lascar rank and gauge in differential algebraic groups: 
\end{rem}
\begin{ques} Is a strongly connected differential algebraic group (strongly connected in the sense of differential gauge) actually strongly connected in the sense of Lascar rank? 
\end{ques}

It is known, by results of Berline and Lascar \cite{BerlineLascar}, that $G$ is $\alpha$-connected if and only if $RU(G)=\omega^\alpha \cdot n$ and $G$ is connected (in the traditional sense that there is no type-definable subgroup of finite index).  This follows after they find normal subgroups of rank $\omega^\alpha \cdot n$ (where $\tau _U(G)=\alpha$ and $n$ is the coefficient of $\omega^\alpha$ in the Cantor normal for of the Lascar rank of $G$), which means that the quotient by such a subgroup has lower leading monomial for its $U$-rank, by the Lascar inequality. Connectedness is also an obvious necessary condition. The sufficiency of the above conditions of Lascar and Berline also follows by application the Lascar inequality.

 So, we are limited to groups of monomial valued $U$-rank. In that case, being $\alpha$-connected is equivalent to being connected (ie, no finite index definable subgroups).   For an arbitrary superstable group $G,$ let the set $\mc S= \{H \subset G \, | \, RU(G/H) < \omega^\alpha \}=\{H \subset G \, | \, \tau_U(G/H) < \tau_U(G) \}.$ The appropriate notion of simple, which we call \emph{almost simple} is the condition that $\tau_U(H) < \tau_U(G)$ for all definable $H$ in $G.$ Note here that $H$ need not be normal to consider $G/H$ as a left coset space.

\begin{prop}\label{central} Suppose that $G$ is $\alpha$-connected. Every type-definable normal subgroup, $N,$  with $\tau_U(N) < \tau_U(G)$ is central. 
\end{prop}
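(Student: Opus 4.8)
The plan is to show that every element of $N$ is centralized by all of $G$; since $N$ is normal this reduces to proving $C_G(a) = G$ for each $a \in N$, which gives $N \subseteq Z(G)$. Throughout I use the characterization recalled above: because $G$ is $\alpha$-connected with $\alpha = \tau_U(G)$, we have $RU(G) = \omega^\alpha \cdot n$ and $G$ is connected, i.e. has no proper type-definable subgroup of finite index.

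Fix $a \in N$ and consider the conjugation orbit $O_a = \{ g a g^{-1} : g \in G \}$. Since $N$ is normal, $O_a \subseteq N$, so $RU(O_a) \le RU(N) < \omega^\alpha$ using the hypothesis $\tau_U(N) < \tau_U(G)$. The centralizer $C_G(a)$ is a definable subgroup, and $g\, C_G(a) \mapsto g a g^{-1}$ is a definable bijection between the coset space $G/C_G(a)$ and $O_a$; hence $RU(G/C_G(a)) = RU(O_a) < \omega^\alpha$.

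The key step is to upgrade this into the statement that $C_G(a)$ has finite index in $G$. Write $\delta = RU(G/C_G(a)) < \omega^\alpha$ and $\gamma = RU(C_G(a))$. The Lascar inequalities give $\gamma + \delta \le RU(G) \le \gamma \oplus \delta$. Because $\omega^\alpha$ is closed under natural sum and $\delta < \omega^\alpha$, the right-hand inequality $\omega^\alpha \cdot n \le \gamma \oplus \delta$ forces the coefficient of $\omega^\alpha$ in the Cantor normal form of $\gamma$ to be at least $n$; since $C_G(a) \le G$ also gives $\gamma \le RU(G) = \omega^\alpha \cdot n$, we conclude $\gamma = \omega^\alpha \cdot n = RU(G)$. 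Substituting $\gamma = RU(G)$ into the left-hand inequality $\gamma + \delta \le RU(G)$ forces $\delta = 0$, so $G/C_G(a)$ is finite.

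Finally, connectedness of $G$ forces the finite-index definable subgroup $C_G(a)$ to equal $G$, so $a \in Z(G)$. As $a \in N$ was arbitrary, $N \subseteq Z(G)$, as desired. I expect the main obstacle to be the ordinal bookkeeping of the third paragraph — extracting $RU(C_G(a)) = RU(G)$ from the two Lascar inequalities together with the monomial hypothesis on $RU(G)$ — along with checking that the coset-space rank identity $RU(G/C_G(a)) = RU(O_a)$ is valid in this possibly type-definable setting.
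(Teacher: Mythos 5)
Your proof is correct, but its second half takes a genuinely different route from the paper's. The opening move is the same: fix $a \in N$ and use the conjugation map to identify the coset space $G/C_G(a)$ (the paper's $Z_G(a)$) with a subset of $N$, giving $RU(G/C_G(a)) \le RU(N) < \omega^{\alpha}$. The divergence is in how each argument gets around the fact that $C_G(a)$ need not be normal, so that the definition of $\alpha$-connectedness cannot be applied to it directly. The paper never computes the rank of $C_G(a)$ itself: it forms the family $\mathcal{S}$ of subgroups $H \le G$ with $RU(G/H) < \omega^{\alpha}$, shows via the auxiliary lemma (right-hand Lascar inequality) that $\mathcal{S}$ is closed under intersection, and argues that the minimal element of $\mathcal{S}$ is characteristic, hence normal, contradicting $\alpha$-connectedness unless $C_G(a) = G$; this uses only the definition of $\alpha$-connectedness and constructs, en route, the $\alpha$-connected component that is reused later in the paper. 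You instead run both Lascar inequalities on the possibly non-normal subgroup $C_G(a)$: the $\oplus$ bound together with the monomial hypothesis $RU(G) = \omega^{\alpha} \cdot n$ forces $RU(C_G(a)) = RU(G)$, the ordinal-sum bound then forces $RU(G/C_G(a)) = 0$, and connectedness finishes. Your route is shorter and sidesteps a real subtlety in the paper's argument (closure of $\mathcal{S}$ under pairwise intersection does not by itself produce a minimal element; one needs a chain condition, e.g.\ a Baldwin--Saxl argument for the uniformly definable family of centralizers), but it consumes the full Berline--Lascar equivalence ($\alpha$-connected iff $RU(G) = \omega^{\alpha} \cdot n$ and $G$ is connected), which the paper recalls only as background and does not need in its own proof. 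Two steps you should make explicit when writing this up: first, the Lascar inequalities for a coset space $G/H$ hold for non-normal $H$ (the paper itself relies on this in the auxiliary lemma, so this is consistent with its conventions); second, $RU(G/C_G(a)) = 0$ literally says every coset is algebraic over the parameters, which gives \emph{bounded} index, and finiteness of the index then follows by compactness because $C_G(a)$ is relatively definable in $G$ --- only after that does connectedness apply.
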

\begin{proof} Consider the map $\alpha: G \times N \rightarrow N$ given by $(g,a) \mapsto gag^{-1}.$  For any fixed $a \in N,$ $\alpha_a(g):=gag^{-1}$ is a definable map from $G$ to $N,$ such that $\alpha_a$ is constant on left cosets of the centralizer of $a$, $Z_G(a).$ So, there is a definable map $\beta,$ such that the diagram commutes, 

$$\xymatrix{G \ar[d]_\pi \ar[r]^{\alpha_a}&
N \\
G/Z_G(a) \ar@{.>}[ru]_\beta}$$
We note that $\alpha_a(g)=\alpha_a(h)$ implies that $h^{-1}g \in Z_G(a).$ Thus, $\beta$ is injective. But, then $\tau_U(G/Z_G(a)) \leq \tau_U(N) < \tau(G),$ so $Z_G(a)$ must be all of $G$, since otherwise we have found a subgroup such that the $U$-rank of the coset space has leading monomial in its Cantor normal form less than $\tau_U(G).$ This means that the rank of $Z_G(a)$ is at least equal to the leading monomial. On the face of things, this should not force $Z_G(a)$ to be all of $G,$ since we do not know that $Z_G(a)$ is a normal subgroup of $G.$ But, in general, one now knows that the set of subgroups $H$ of $G$ such that the coset space has rank less than $\omega^\alpha$ is nonempty. This set is closed under intersections and the minimal element in the set will be a definable characteristic (so normal) subgroup of $G$ which shows that $G$ is not $\alpha$-connected. So, it must be that $G=Z_G(a).$ 
\end{proof}

To make clear the last several lines of the proof, here is an easy lemma which has appeared in many places. 
\begin{lem} Suppose that $H_1$ and $H_2$ are definable subgroups such that the coset spaces $G/H_1$ and $G/H_2$ have U-rank less than $\omega^\alpha$. Then the same is true of $H_1 \cap H_2.$  
\end{lem}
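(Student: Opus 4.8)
The plan is to embed the coset space $G/(H_1 \cap H_2)$ definably into the product $G/H_1 \times G/H_2$ and then control the rank of the product via the Lascar inequality. First I would pass to $T^{eq}$, where both coset spaces are honestly definable, and consider the map
$$ \phi \colon G/(H_1 \cap H_2) \longrightarrow G/H_1 \times G/H_2, \qquad g(H_1 \cap H_2) \longmapsto (gH_1,\, gH_2). $$
This is well defined and definable, and it is injective: if $g_1 H_1 = g_2 H_1$ and $g_1 H_2 = g_2 H_2$, then $g_2^{-1} g_1$ lies in both $H_1$ and $H_2$, hence in $H_1 \cap H_2$, so that $g_1(H_1 \cap H_2) = g_2(H_1 \cap H_2)$. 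Thus $\phi$ is a definable injection of definable sets.

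Second, I would use the standard fact that a definable injection does not increase Lascar rank. If $a$ realizes a type in $G/(H_1 \cap H_2)$ and $b = \phi(a)$, then $b \in \operatorname{dcl}(a)$ and, by injectivity, $a \in \operatorname{dcl}(b)$, so $RU(a) = RU(a,b) = RU(b)$; taking the supremum over types gives $RU(G/(H_1 \cap H_2)) \leq RU(G/H_1 \times G/H_2)$. The rank of the product is then bounded by the Lascar inequality applied to a pair: for any $(b_1,b_2)$ realizing a type in $G/H_1 \times G/H_2$ one has $RU(b_1,b_2) \leq RU(b_1/b_2) \oplus RU(b_2) \leq RU(b_1) \oplus RU(b_2)$, where $\oplus$ is the natural (Hessenberg) sum, and hence, by monotonicity of $\oplus$, $RU(G/H_1 \times G/H_2) \leq RU(G/H_1) \oplus RU(G/H_2)$.

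Finally, I would invoke the closure of the ordinals below $\omega^\alpha$ under natural sum. Writing the Cantor normal forms of $RU(G/H_1)$ and $RU(G/H_2)$, both have all exponents strictly less than $\alpha$ by hypothesis; the natural sum merely adds the coefficients of matching powers of $\omega$, so the result again has all exponents $< \alpha$ and is therefore again $< \omega^\alpha$. Chaining the three inequalities yields $RU(G/(H_1 \cap H_2)) < \omega^\alpha$, which is the assertion of the lemma.

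I expect the main obstacle here to be bookkeeping rather than mathematical depth. The substantive inputs are the product bound, which is precisely where the Lascar inequality enters, and the passage from a definable injection to a rank inequality, which relies on working in $T^{eq}$ and on the description of the rank of a definable set as a supremum of ranks of the types it contains. The one point requiring care is that $H_1 \cap H_2$ need not be normal, so $G/(H_1 \cap H_2)$ is only a coset space and not a group; one must confirm that this plays no role, and indeed it does not, since none of the steps above uses a group structure on the quotient.
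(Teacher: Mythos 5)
Your proof is correct, but it follows a genuinely different route from the paper's. The paper factors through the tower $H_1 \cap H_2 \leq H_1 \leq G$: it first embeds the coset space $H_1/(H_1 \cap H_2)$ into $G/H_2$ via $h(H_1 \cap H_2) \mapsto hH_2$, so that $RU(H_1/(H_1\cap H_2)) \leq RU(G/H_2) < \omega^\alpha$, and then applies the Lascar inequality in its coset-space form for a chain of subgroups, $RU(G/(H_1 \cap H_2)) \leq RU(G/H_1) \oplus RU(H_1/(H_1 \cap H_2))$, finishing with the same observation you make, that ordinals below $\omega^\alpha$ are closed under natural sum. You instead embed $G/(H_1 \cap H_2)$ diagonally into the product $G/H_1 \times G/H_2$ and bound the rank of the product using the pair form of the Lascar inequality. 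These are, in effect, the two classical proofs of Poincar\'e's lemma (an intersection of two finite-index subgroups has finite index) transported to $U$-rank. The paper's argument is shorter because it invokes the subgroup-chain version of the inequality already available in the superstable group literature; note that this version itself rests on the same pair inequality you use, applied to the fibration $G/(H_1\cap H_2) \to G/H_1$ whose fibers are copies of $H_1/(H_1\cap H_2)$. Your argument buys several things: it needs only the basic inequality $RU(ab) \leq RU(a/b) \oplus RU(b)$ together with invariance of rank under definable bijections, it is symmetric in $H_1$ and $H_2$, it extends verbatim to finitely many subgroups, and it makes explicit two points the paper leaves implicit, namely the passage to $T^{eq}$ where the coset spaces become definable sets, and the fact that no normality of $H_1 \cap H_2$ is needed anywhere.
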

\begin{proof} There is a natural injection of coset spaces $H_1/H_1 \cap H_2 \rightarrow G/H_2,$ so $$RU(H_1/H_1 \cap H_2) < RU(G/H_2).$$ Now, by the Lascar inequality, $RU(G/H_1 \cap H_2) \leq RU(G/H_1) \oplus RU(H_1/H_1 \cap H_2) .$ In particular, since both of the terms of the sum have U-rank less than $\omega^\alpha,$ so does the sum. Thus, the set $\mc S$ is closed under intersection. We further note that the minimal element in this set is a characteristic subgroup and is called the $\alpha$-connected component.
\end{proof}

\begin{prop}\label{imageconnected} The image of an strongly connected group under a definable homomorphism is strongly connected or trivial. 
\end{prop}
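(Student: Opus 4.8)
The plan is to reduce the statement to the behavior of $\tau_U$ under quotients, which is precisely what $\alpha$-connectedness controls. Let $G$ be strongly connected with $\tau_U(G)=\alpha$, let $\phi\colon G\to K$ be a definable homomorphism, and set $N=\ker\phi$, a definable (hence type-definable) normal subgroup of $G$. By the first isomorphism theorem for groups interpretable in a stable theory, $\phi(G)$ is a definable group isomorphic to $G/N$. Since strong connectedness is phrased purely in terms of Lascar rank and type-definable normal subgroups, it is invariant under definable isomorphism, so it suffices to show that $G/N$ is either trivial or $\alpha$-connected.

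First I would dispose of the degenerate case: if $N=G$ then $\phi(G)$ is trivial and there is nothing to prove, so assume $N$ is a proper normal subgroup. Because $G$ is $\alpha$-connected and $N$ is a proper type-definable normal subgroup, the defining property yields $\tau_U(G/N)=\tau_U(G)=\alpha$; in particular $G/N$ is nontrivial with the same $\tau_U$. The heart of the argument is then to check the defining property of $\alpha$-connectedness for $G/N$ itself. Let $\bar M$ be any proper type-definable normal subgroup of $G/N$, let $\pi\colon G\to G/N$ be the canonical projection, and put $M=\pi^{-1}(\bar M)$. Then $M$ is a type-definable normal subgroup of $G$ containing $N$ (preimages of type-definable sets under definable maps are type-definable, and normality is preserved), and $M$ is proper since $\bar M$ is. By the third isomorphism theorem $(G/N)/\bar M\cong G/M$, and applying $\alpha$-connectedness of $G$ to the proper type-definable normal subgroup $M$ gives $\tau_U(G/M)=\alpha$. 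Hence $\tau_U\big((G/N)/\bar M\big)=\alpha=\tau_U(G/N)$, and since $\bar M$ was arbitrary, $G/N$ is $\alpha$-connected.

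The step I expect to require the most care is the bookkeeping in the type-definable category: one must verify that the correspondence $M\leftrightarrow \bar M=M/N$ between type-definable normal subgroups of $G$ containing $N$ and type-definable normal subgroups of $G/N$ is well behaved, and that the isomorphism $(G/N)/(M/N)\cong G/M$ is one of definable (interpretable) groups, so that the Lascar rank identities genuinely transfer. I would also flag the convention, implicit in the definition and consistent with the Berline--Lascar characterization $RU(G)=\omega^\alpha\cdot n$ with $G$ connected, that the subgroups appearing in the definition of $\alpha$-connectedness are taken proper with infinite quotient, since $\tau_U(G/H)$ is not defined for finite quotients; connectedness of $G/N$ then follows automatically from the same correspondence.
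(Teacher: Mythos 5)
Your proof is correct and takes essentially the same approach as the paper's: pull back type-definable normal subgroups of the image along the homomorphism (identifying $(G/N)/\bar M$ with $G/M$) and apply the $\alpha$-connectedness of $G$. The paper's one-line proof is just the contrapositive of your direct verification, so the two arguments are the same up to logical repackaging.
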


\begin{proof} Suppose that the image is nontrivial and not strongly connected. Then taking the inverse image of the definable subgroup of the image which shows non-strong connectedness would show the non-strong connectedness of $G$ itself. 
\end{proof}

\section{Isogeny}
The notion of strongly connected (or $\alpha$-connected - recall we assume that the Lascar rank of $G$ has $\omega ^\alpha$ appearing as the leading term in its Cantor normal form) plays the role that connected plays in algebraic groups. Almost simple plays the role of quasi simple. Now we define isogeny in this setting. 

\begin{defn} Suppose that $G$ and $H$ are $\alpha$-connected. Then a group homomorphism $\phi : G \rightarrow H$ is an isogeny if $\phi$ is surjective and $\tau_U (Ker \phi) < \tau_U (G).$ We way that $H_1$ and $H_2$ are isogenous if there are $\phi_i: G \rightarrow H_i$ which are isogenies. 
\end{defn}

We are not generally dealing with definability problems in this paper, so even if we do not explicitly say so, groups are assumed to be type-definable.

\begin{prop} \label{isogenyfiber}
Let $G_1$ and $G_2$ be $\alpha$-connected subgroups. The following are equivalent: 
\begin{itemize}
\item There is an $\alpha$-connected group $H$ and isogenies $\phi_i: H \rightarrow G_i$:\\ 
$$\xymatrix{
&H \ar[rd]^{\phi_2}  \ar[ld]_{\phi_1} \\
G_1  &
 &
G_2 
}$$

\item There is an $\alpha$-connected group $K$ and isogenies $\psi_i: G_i \rightarrow K:$\\
$$\xymatrix{
G_1 \ar[rd]^{\psi_1}  &
 &
G_2 \ar[ld]_{\psi_2} \\
&K 
}$$
\end{itemize}
\end{prop}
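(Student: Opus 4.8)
The plan is to prove the two implications by dual constructions: a quotient (pushout) for ``first bullet implies second'' and a fiber product (pullback) for ``second implies first.'' Throughout I will use the basic fact that an isogeny preserves the leading term of Lascar rank: if $\phi\colon A\to B$ is an isogeny, then the Lascar inequality applied to $A$, its normal subgroup $\ker\phi$, and the quotient $B$, together with $\tau_U(\ker\phi)<\tau_U(A)$, forces $\tau_U(A)=\tau_U(B)=\alpha$. In particular all of $G_1$, $G_2$, and any intermediate isogenous group share the leading exponent $\alpha$. The ranks of the kernels produced below will be computed using the isomorphism theorems.

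For ``first bullet implies second,'' suppose $\phi_i\colon H\to G_i$ are isogenies and set $N_i=\ker\phi_i$, so that $G_i\cong H/N_i$ and $\tau_U(N_i)<\alpha$. Let $N=N_1N_2$, a normal subgroup of $H$, and put $K=H/N$ with $\psi_i\colon G_i=H/N_i\to H/N=K$ the natural quotient maps. Each $\psi_i$ is surjective with kernel $N/N_i\cong N_{3-i}/(N_1\cap N_2)$, whose $\tau_U$ is at most $\tau_U(N_{3-i})<\alpha$; the same Lascar-inequality computation gives $\tau_U(N)<\alpha$, so $N\neq H$ and $K$ is nontrivial. Since $K$ is the image of the $\alpha$-connected group $H$ under a definable homomorphism, Proposition \ref{imageconnected} makes $K$ $\alpha$-connected, and the $\psi_i$ are therefore isogenies.

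For ``second implies first,'' suppose $\psi_i\colon G_i\to K$ are isogenies and form the fiber product $P=\{(g_1,g_2)\in G_1\times G_2 : \psi_1(g_1)=\psi_2(g_2)\}$ with the coordinate projections $\pi_i\colon P\to G_i$. Surjectivity of $\psi_{3-i}$ makes each $\pi_i$ surjective, and $\ker\pi_i\cong\ker\psi_{3-i}$ has $\tau_U<\alpha$, which forces $\tau_U(P)=\alpha$. The catch is that $P$ need not be $\alpha$-connected, so I pass to its $\alpha$-connected component $H=P^0$, the minimal type-definable subgroup with $\tau_U(P/H)<\alpha$ furnished by the intersection lemma above; note $\tau_U(H)=\alpha$. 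It remains to check that $\phi_i:=\pi_i|_H$ is still an isogeny. Its kernel sits inside $\ker\pi_i$ and so has $\tau_U<\alpha$; the real point is surjectivity. Here $\phi_i(H)$ is $\alpha$-connected or trivial by Proposition \ref{imageconnected}, and the induced surjection of coset spaces $P/H\to G_i/\phi_i(H)$ shows $\tau_U\bigl(G_i/\phi_i(H)\bigr)<\alpha$. Because $G_i$ is $\alpha$-connected, the minimal-element argument used in Proposition \ref{central} — the minimal member of the set of subgroups whose coset space has rank below $\omega^\alpha$ is characteristic, hence normal, and would witness non-connectedness unless it is everything — forces $\phi_i(H)=G_i$. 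Thus $\phi_i$ is a surjection with small kernel between $\alpha$-connected groups, i.e.\ an isogeny.

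The main obstacle is precisely this last verification: the fiber product is the correct object only after replacing it by its $\alpha$-connected component, and the delicate step is showing that the restriction of the projection to that component remains surjective, which is where the characteristic-subgroup argument of Proposition \ref{central} is essential. Everything else is bookkeeping with the Lascar inequality and the isomorphism theorems; following the convention stated in the paper, I will not dwell on the type-definability of $N_1N_2$ or of $P$, since definability issues are being suppressed.
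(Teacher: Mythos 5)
Your proof is correct, and it follows the same two constructions as the paper: for the first implication the paper likewise forms $K=H/(\ker\phi_1\,\ker\phi_2)$ with the induced maps $G_i\to K$, and for the converse it likewise takes the fiber product inside $G_1\times G_2$ with its coordinate projections. The one substantive difference is in the fiber-product direction, and it is to your credit. The paper stops after observing that the projections are surjective with kernels of rank $<\omega^\alpha$; it never addresses whether the fiber product is itself $\alpha$-connected, which is genuinely needed, since both the statement and the paper's definition of isogeny require the covering group to be $\alpha$-connected. Your extra step --- passing to the $\alpha$-connected component $P^0$, noting its coset space in $P$ has rank $<\omega^\alpha$, and then recovering surjectivity of the restricted projections via the argument of Proposition \ref{central} (if $\phi_i(P^0)$ were a proper subgroup of $G_i$, the coset space $G_i/\phi_i(P^0)$ would have rank $<\omega^\alpha$, so the minimal subgroup of $G_i$ with small coset space would be a proper characteristic, hence normal, type-definable subgroup contradicting $\alpha$-connectedness of $G_i$) --- is exactly the repair the printed argument needs, and your use of Proposition \ref{imageconnected} and the Lascar inequality elsewhere matches the paper's bookkeeping. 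So your proposal is not merely correct; on this point it is more complete than the paper's own proof.
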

\begin{proof} 
Let $H$ and $\phi_i$ be as in condition 1). Let $H_1 = \phi_1 (ker \phi_2)$ and $H_2 = \phi_2(ker \phi_1).$ Then $H_1 = \phi_1 (ker \phi_1 ker \phi_2),$ and $H_2 = \phi_2 (ker \phi_1 ker \phi_2).$ Then $$G_1 /H_1 = \phi_1(H)/ \phi_1(ker \phi_2)=\phi(H)/ \phi_1(ker \phi_1 ker \phi_2)=H/ (ker\phi_1 \phi_2).$$
$$G_2/H_2 = \phi_2(H)/\phi_2(ker(\phi_2))=\phi_2(H)/\phi_2(ker \phi_1 ker \phi_2)=H/(ker \phi_1 ker \phi_2).$$

So, let $K=H/(ker \phi_1 ker \phi_2).$ $K,$ being the image of an $\alpha$-connected group $H$ is $\alpha$-connected. Further, $\tau_U (ker \phi_i)<\alpha,$ so $\tau_U(ker \phi_1 ker \phi_2)< \alpha.$ But, then letting $\psi_i$ be the projection map $G_i \rightarrow G_i/H_i =K.$ We have shown that $\psi_i$ is an isogeny. 

Now, assume condition 2). We let $G= \{ (g_1 ,g_2) \in G_1 \times G_2 \, | \, \psi_1(g_1)=\psi_2(g_2) \}.$ Then there are natural surjective projections $\phi_i:G \rightarrow G_i.$ But, then we see that $\tau_U(G) \geq \tau_U(G_i).$ As the kernel of the projection maps, $\phi_i,$ are contained in $ker \psi_1 \times ker \psi_2,$ the Lascar rank of the kernels of the maps is less than $\omega^ \alpha,$ since both of the groups in the product are (by virtue of $\psi_i$ being an isogeny). So, $\phi_i$ is an isogeny. 
\end{proof}

\begin{prop} Isogeny is an equivalence relation on the $\alpha$-connected type-definable subgroups in a superstable group $G$ with $RU(G) = \omega^ \alpha + \beta.$ Let $G_1$ and $G_2$ be isogenous. Then,\begin{itemize}
\item There is a bijection, $r$ between the type-definable subgroups $G_1 \leq G$ with $\tau_U(G_1)=\tau_U(G)$ and those $K_1 \leq K$ with $\tau_U(K_1)=\tau_U(K).$ 
\item Suppose that $r(G_1)=K_1$ and $r(G_2)=K_2.$ \\ $G_1 \leq G_2$ if and only if $K_1 \leq K_2.$ \\ $G_1 \lhd G_2$ if and only if $K_1 \lhd K_2.$ 
\item If, as above, $G_1 \lhd G_2,$ then $\tau_U (G_2/G_1)= \alpha,$ $G_2 /G_1$ is strongly connected, and $G_2/ G_1 $ is isogenous to $K_2 /K_1.$
\item Products of isogenous groups are isogenous. 
\end{itemize}
\end{prop}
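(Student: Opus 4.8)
The plan is to establish the equivalence relation first, then construct $r$ from the two legs of a common isogeny and read off the remaining bullets from the correspondence theorem and the Lascar inequality. Reflexivity and symmetry are immediate: the identity is an isogeny (its trivial, hence small, kernel has $\tau_U < \alpha$), and the span of isogenies witnessing that $X$ and $Y$ are isogenous also witnesses that $Y$ and $X$ are. Transitivity is the one substantive point, and for it I would first record that a composite $\psi\phi$ of isogenies $\phi : A \to B$ and $\psi : B \to C$ is again an isogeny: it is surjective, and the exact sequence $1 \to \ker\phi \to \ker(\psi\phi) \to \ker\psi \to 1$ together with the Lascar inequality gives $\tau_U(\ker(\psi\phi)) < \alpha$. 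Given $X$ isogenous to $Y$ (via a top $M$) and $Y$ to $Z$ (via a top $M'$), the two isogenies landing in $Y$ form a cospan $M \to Y \leftarrow M'$; Proposition \ref{isogenyfiber} (cospan $\Rightarrow$ span) supplies an $\alpha$-connected $P$ with isogenies to $M$ and $M'$, and composing with the original legs yields isogenies $P \to X$ and $P \to Z$.

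For the correspondence I would use Proposition \ref{isogenyfiber} once more to replace the pair $(G,K)$ by a common quotient: isogenies $\lambda : G \to L$ and $\mu : K \to L$ with $L$ $\alpha$-connected. It then suffices to analyze a single isogeny $\phi : A \to B$ and compose. The correspondence theorem gives an inclusion- and normality-preserving bijection between subgroups of $A$ containing $\ker\phi$ and subgroups of $B$; what must be added is that it respects fullness, and this is where the Lascar inequality enters: for $H \supseteq \ker\phi$ the map $\phi|_H : H \to \phi(H)$ is surjective with small kernel $\ker\phi$, so $\tau_U(H) = \tau_U(\phi(H))$, and likewise for preimages, whence $H$ is full iff $\phi(H)$ is; moreover $A/H \cong B/\phi(H)$ for normal $H$. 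Defining $r$ as ``$\mu$-preimage of $\lambda$-image'' then gives an order- and normality-preserving bijection, which is exactly the content of the second bullet.

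The hard part will be the precise meaning of the bijection in the first bullet. The correspondence attached to $\phi$ is a genuine bijection only on subgroups containing $\ker\phi$: a general full $H \le A$ satisfies merely $\phi^{-1}(\phi(H)) = H\cdot\ker\phi$, differing from $H$ by the small group $\ker\phi$. Thus the honest statement is a bijection between full subgroups of $G$ containing $\ker\lambda$ and full subgroups of $K$ containing $\ker\mu$; to read it as a bijection of all full subgroups one must identify $H$ with $H\cdot\ker\lambda$, i.e.\ work modulo small subgroups (equivalently, up to isogeny). Pinning this identification down and checking its compatibility with inclusion and normality is the delicate step, and it is the superstable counterpart of discarding the finite kernel in the algebraic-group correspondence.

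The third bullet falls out of the same diagram. Writing $L_i := \lambda(G_i)$ and $K_i = \mu^{-1}(L_i)$, the composite $G_2 \to L_2 \to L_2/L_1$ is surjective with kernel differing from $G_1$ by a small subgroup, so it induces an isogeny $G_2/G_1 \to L_2/L_1$; symmetrically for $K_2/K_1$, and transitivity yields $G_2/G_1$ isogenous to $K_2/K_1$. That $G_2/G_1$ is strongly connected (or trivial) is immediate from Proposition \ref{imageconnected}, as it is a homomorphic image of the strongly connected group $G_2$. For the rank I would apply the Lascar inequality to $G_1 \lhd G_2$: since $G_1,G_2$ are full subgroups of a group with $RU(G)=\omega^\alpha+\beta$ their leading coefficients are both $1$, which forces $\tau_U(G_2/G_1) < \alpha$ (so the displayed exponent should be read as the leading exponent of the quotient, not $\alpha$ itself). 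Finally, for products: if $M,N$ are common tops for $G\sim G'$ and $H\sim H'$, then $M\times N$ is $\alpha$-connected and the two product maps are isogenies, because $RU$ of a product is the natural sum of the factors' ranks, so the product of the two small kernels is again small.
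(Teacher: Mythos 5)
Your treatment of reflexivity, symmetry, transitivity, and products is correct and is essentially the paper's own argument: transitivity is proved exactly as you propose, by converting the cospan at the middle group into a span via Proposition \ref{isogenyfiber} and then composing, with the kernel of the composite bounded by the Lascar inequality (your exact-sequence phrasing is in fact cleaner than the paper's). The genuine gap is in the first two bullets, and it is precisely the point you flag and then defer. The correspondence theorem attached to a single isogeny $\phi$ is a bijection only on subgroups containing $\ker\phi$; for an arbitrary full subgroup $H$ one only has $\phi^{-1}(\phi(H)) = H\cdot\ker\phi \neq H$. Your proposed repair --- identify $H$ with $H\cdot\ker\lambda$ and ``work modulo small subgroups'' --- does not prove the stated proposition: it replaces a bijection between subgroups by a bijection between equivalence classes of subgroups, and you explicitly leave open (``pinning this identification down \dots is the delicate step'') how to choose representatives compatibly with inclusion and normality. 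That unresolved step is the actual content of the bullet, not a technicality.

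The missing idea is to restrict to $\alpha$-connected subgroups of full leading rank and to use uniqueness of $\alpha$-connected representatives. The paper works with the span $\phi_G : H \to G$, $\phi_K : H \to K$ and shows that for $\alpha$-connected $G_1 \leq G$ with $\tau_U(G_1)=\tau_U(G)$, the $\alpha$-connected component $\phi_G^{-1}(G_1)^{(\alpha)}$ of the preimage is the \emph{unique} $\alpha$-connected subgroup of $H$ mapping onto $G_1$: it does surject, because its image is a strongly connected subgroup of $G_1$ with the same leading monomial of Lascar rank and hence equals $G_1$ by Berline--Lascar, while any other candidate would be a proper type-definable subgroup of $\phi_G^{-1}(G_1)^{(\alpha)}$, hence of strictly smaller leading rank, and could not surject. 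Setting $r(G_1) := \phi_K\bigl(\phi_G^{-1}(G_1)^{(\alpha)}\bigr)$ then gives a bijection between actual subgroups, and inclusion and normality transfer because $\alpha$-connected subgroups of a preimage lie inside its $\alpha$-connected component, which is characteristic. The same device would close your cospan version: on $\alpha$-connected full subgroups, $G_1 \mapsto \lambda(G_1)$ is injective because $G_1$ is recovered as the $\alpha$-connected component of $\lambda^{-1}(\lambda(G_1))$, and it is surjective onto the $\alpha$-connected full subgroups of $L$ by the same Berline--Lascar argument. Finally, on the third bullet: reading the hypothesis with the paper's standing convention $RU(G)=\omega^\alpha\cdot n+\beta$ (the coefficient one in the proposition's statement is a slip) and taking $G_1$ properly normal in $G_2$, the equality $\tau_U(G_2/G_1)=\alpha$ is immediate from the definition of $\alpha$-connectedness of $G_2$, so it should not be weakened to an inequality as you suggest; your Lascar-inequality computation instead shows that under the literal coefficient-one hypothesis there are no proper inclusions of full $\alpha$-connected subgroups at all, which makes that bullet vacuous rather than true with a smaller exponent.
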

\begin{proof} Reflexivity and symmetry of the isogeny relation are clear. Now, Suppose that $H_1$ is isogenous to $H_2$ and $H_2$ is isogenous to $H_3.$ Then, we have a diagram of isogenies with $\alpha$-connected $K_1$ and $K_2$: 

$$\xymatrix{
& K_1 \ar[rd]^{\phi_2} \ar[ld]_{\phi_1} &  & K_2 \ar[rd]^{\psi_2} \ar[ld]_{\psi_1} & \\
H_1 & &  H_2 & & H_3 
}$$

But, by \ref{isogenyfiber}, we get the following diagram, with isogenies and $\alpha$-connected $L:$

$$\xymatrix{
& &  L \ar[rd]^{\pi_2} \ar[ld]_{\pi_1} &  & \\
& K_1 \ar[rd]^{\phi_2} \ar[ld]_{\phi_1} &  & K_2 \ar[rd]^{\psi_2} \ar[ld]_{\psi_1} & \\
H_1 & &  H_2 & & H_3 
}$$

For $H_1$ to be isogenous to $H_3$, we would require that $\phi_1 \circ \pi_1$ and $\psi_2 \circ \pi_2$ are isogenies. Surjectivity is obvious. To show that the kernel of either of the compositions is of $U$-rank less than $\omega^\alpha.$ The fiber of $\pi_1$ over any point of $K_1.$ is a coset of the kernel of $\pi_1.$ Therefore, by the Lascar inequality, the kernel of the map $\phi_1 \circ \pi_1$ is bounded above by $RU(a) \oplus RU(ker \pi),$ where $a$ is an element of the kernel of $\phi_1.$ Of course, this implies that $RU(a) < \omega^\alpha.$ So, $\tau_U(ker(\phi_1 \circ \pi_1))<\alpha.$ Then, by a symmetric argument on $\psi_2 \circ \pi_2,$ both maps are isogenies. \\
Suppose that we have the following diagram: 

$$\xymatrix{
& H \ar[rd]^{\phi_G} \ar[ld]_{\phi_K}    \\
G & &  K 
}$$
Then we claim there is a bijection between the sets of subgroups of $G_1 \leq G$ and $K_1 \leq K$ with $\tau_U(G_1)=\tau_U(G)=\tau_U(K_1)=\tau_U(K).$ We will set up a correspondence between these two types of subgroups. Let $r(G_1) = K_1 $ if there is a definable $\alpha$-connected subgroup $H_1 \leq H$ with $\phi_G(H_1)=G_1.$ To show that the map $r$ is well-defined and bijective, it suffices to show that there is a unique choice of $\alpha$-connected subgroup $H_1 \leq H$ with $\phi_G(J)=H_1.$ Of course, there is one natural candidate, namely, the $\alpha$-connected component of the inverse image of $G_1,$ which we will denote $\phi_G^{-1} (G_1)^{(\alpha)}.$  Certainly, by the Lascar inequality and the fact that $\tau_U(ker \phi_G)< \alpha,$ we know that $\tau(\phi_G^{-1} (G_1))=\alpha.$ So, at least $\phi_G^{-1} (G_1)^{(\alpha)}$ is a definable group which is $\alpha$-connected and of suitable rank. We claim that $\phi_G (\phi_G^{-1} (G_1)^{(\alpha)})=G_1.$ Of course, the image is contained in $G_1.$ But, suppose that $RU(G_1)=\omega^\alpha \cdot n + \beta.$ So, $RU( \phi^{-1}(G_1))=\omega^\alpha n + \gamma.$ That the Lascar rank of the inverse image is at least this big for some small $\gamma$ is trivial. That it is at most this big follows from the Lascar inequality and the fact that $RU(ker \phi_G \cap \phi^{-1}(G)) \leq RU(ker \phi_G) < \omega ^\alpha.$ So, the image of $\phi^{-1}_G(G_1)^{(\alpha)}$ is a strongly connected subgroup of $G_1$ of the same leading monomial $U$-rank. But, then, by Berline-Lascar, the image is $G_1.$ Now, we claim that there is no other choice of $H_1.$ If there was, it would have to be a proper definable subgroup of $\phi^{-1}_G(G_1)^{(\alpha)}$. But, we know that all such subgroups have leading monomial $U$-rank less than $\phi^{-1}_G(G_1)^{(\alpha)}$ by virtue of $\alpha$-connectedness (iff monomial valued $U$-rank and $\alpha$-connectedness). Of course, then the image of such a group can not be all of $G_1,$ simply by virtue of rank. The correspondence is bijective, since the image of a $\alpha$-connected subgroup $H_1$ under $\phi_G$ is an $\alpha$-connected \ref{imageconnected} subgroup $G_1$ of $G$ with the same $U$-rank (by the now familiar "rank of the kernel is small" argument). Thus, there is a bijective correspondence between the $\alpha$-connected subgroups of $H$ and $G.$ This argument is, completely symmetric, so there is also such a correspondence for $H$ and $K.$\\
All of the subgroups in the following paragraph are $\alpha$-connected. Suppose that $r(G_1)=K_1$ and $r(G_2)=K_2.$ Then suppose that $G_1 \leq G_2.$ Then $\phi^{-1}_G(G_1)^{(\alpha)} \leq \phi^{-1}_G(G_2)^{(\alpha)},$ because $\alpha$-connected subgroups of $\phi^{-1}_G(G_2)$ must be contained in the $\alpha$-connected component. 

Of course, this implies that $K_1=\phi_K(\phi^{-1}_G(G_1)^{(\alpha)}) \leq \phi_K(\phi^{-1}_G(G_2)^{(\alpha)})=K_2.$
Now we assume that $G_1 \lhd G_2$. Then $\phi^{-1}_G (G_1) \lhd \phi^{-1}_G (G_2).$  Since the $\alpha$-connected component of a group is characteristic, $\phi^{-1}_G (G_1)^{(\alpha ) } \lhd \phi^{-1}_G (G_2).$ So, $\phi^{-1}_G (G_1)^{(\alpha ) } \lhd \phi^{-1}_G (G_2)^{(\alpha ) }.$ But, then $K_1 = \phi_K(\phi^{-1}_G (G_1)^{(\alpha ) }) \lhd \phi_K (\phi^{-1}_G (G_2)^{(\alpha ) }=K_2.$ \\ The maps induced by $\phi_G$ and $\phi_K$ on the quotient $H_2 /H_1$ is an isogeny, since it is surjective onto its image and the kernel of the map is the quotient of the kernel of an isogeny and $\tau(H_2 /H_1)= \tau(H).$ \\
Products of isogenous groups are isogenous, becuase taking a product of the isogeny maps gives an isogeny map (surjectivity is clear and the $U$-rank of the kernel is bounded by the Cantor sum of the $U$-rank of the kernels in the product). 
\end{proof}

\begin{rem} For more details on the following brief remarks, see \cite{Poizat}. In superstable theories, all types are coordinatized by \emph{regular} types. One often considers the equivalence relation of \emph{nonorthogonality} of the regular types. The strongly connected groups considered here have generics which are a product of regular types, each nonorthogonal to a type of rank $\omega ^\alpha.$ The equivalence relation of nonorthogonality is much coarser than isogeny. The isogeny relation on almost simple groups is finer, and takes into account the group theoretic properties of the definable group in ways which nonorthogonality does not. 

Let $G$ be a (non-commutative) quasi-simple algebraic group. In algebraically closed fields, the nonorthogonality relation is trivial, since any two positive rank types are nonorthogonal. The isogeny relation is nontrivial, and it matches the classical definition. Even in settings in which the nonorthogonality relation is highly nontrivial (for instance differentially closed fields), the isogeny relation is finer. Of course, almost simplicity is not a sufficient condition for a connected group to have regular generic type. In the setting of differential algebraic groups, is it necessary?
\end{rem}

\begin{lem}\label{omit} Let $G$ be a strongly connected and non-commutative group. Then $\tau_U([G,G])=\tau_U(G).$  
\end{lem}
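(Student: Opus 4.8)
The plan is to prove $\tau_U([G,G]) \geq \tau_U(G) = \alpha$, since the reverse inequality is immediate: $[G,G] \leq G$ gives $RU([G,G]) \leq RU(G)$, so the leading power of the Cantor normal form can only drop, i.e. $\tau_U([G,G]) \leq \alpha$. I would argue by contradiction: suppose $\tau_U([G,G]) < \alpha$. Since $[G,G]$ is a characteristic, hence normal, type-definable subgroup (adopting the paper's blanket type-definability convention), Proposition \ref{central} applies and forces $[G,G]$ to be central, i.e. $[G,G] \leq Z(G)$; in particular $G$ is nilpotent of class at most two.

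The key step exploits the centrality of commutators. Fix $a \in G$ and consider the map $\gamma_a : G \rightarrow [G,G]$ given by $\gamma_a(g) = [a,g] = a^{-1}g^{-1}ag$. Because every commutator is now central, a short computation yields $\gamma_a(gh) = \gamma_a(g)\gamma_a(h)$, so $\gamma_a$ is a group homomorphism whose kernel is exactly the centralizer $Z_G(a)$. The first isomorphism theorem then provides an embedding $G/Z_G(a) \hookrightarrow [G,G]$, whence $\tau_U(G/Z_G(a)) \leq \tau_U([G,G]) < \alpha$ for every $a \in G$.

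Since $G$ is non-commutative, I would choose an $a \notin Z(G)$; then $Z_G(a)$ is a \emph{proper} subgroup lying in the set $\mc S = \{H \leq G \,:\, \tau_U(G/H) < \alpha\}$ from the earlier discussion. By the preceding Lemma, $\mc S$ is closed under intersection, so it possesses a minimal element $N$, and $N$ is characteristic (hence normal) with $N \leq Z_G(a) \subsetneq G$. But then $N$ is a proper normal type-definable subgroup with $\tau_U(G/N) < \alpha$, directly contradicting the $\alpha$-connectedness of $G$. This contradiction establishes $\tau_U([G,G]) = \alpha = \tau_U(G)$.

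The main obstacle, and the only place genuine content enters, is the passage to centrality via Proposition \ref{central} together with the resulting observation that the commutator maps $\gamma_a$ become homomorphisms; without centrality $\gamma_a$ is merely a definable map, and the clean isomorphism-theorem bound on $\tau_U(G/Z_G(a))$ is unavailable. Once centrality is secured, the remainder is the familiar ``centralizers are large, and the minimal element of $\mc S$ is a normal subgroup contradicting connectedness'' argument already deployed in the proof of Proposition \ref{central}.
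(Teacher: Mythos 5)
Your proof is correct and follows essentially the same route as the paper's: both invoke Proposition \ref{central} to make $[G,G]$ central, observe that the commutator map $c_a$ (your $\gamma_a$) thereby becomes a definable homomorphism into $[G,G]$ with kernel $Z_G(a)$, and then contradict $\alpha$-connectedness via the smallness of $G/Z_G(a)$; your minimal-element-of-$\mc S$ endgame simply spells out what the paper compresses into ``this is impossible since $G$ is $\alpha$-connected.'' One small correction: the type-definability of $[G,G]$ is not a ``blanket convention'' but a theorem --- the paper derives it from the $\alpha$-indecomposability theorem of Berline--Lascar.
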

\begin{proof} 
Implicit in the lemma is the fact that $[G,G]$ is type-definable. This follows from $\alpha$-indecomposability theorem of \cite{BerlineLascar}. In fact, if $G$ is definable then so is the commutator.
  We note that by \ref{central}, we know that If $\tau_U(H)<\tau_U(G),$ then $H \leq Z(G).$ 
 For any $a \in G,$ define 
$$c_a:G \rightarrow G$$
$$x \mapsto axa^{-1}x^{-1}.$$
But, assuming that $H \leq Z(G)$ means that $c_a$ is a  definable homomorphism from $G$ to $H.$ This is impossible because if $\tau_U(H)<\tau_U(G),$ then the kernel of the map is a subgroup of $G$ with the property that $RU(Ker c_a) \oplus RU(H) \geq RU(G)$ by the Lascar inequality. But, this means that $RU(Ker c_a) \geq \omega^\alpha n.$ This is impossible since $G$ is $\alpha$-connected. 
\end{proof}

\begin{rem} Even in the case that $[G,G]$ (or another normal abstract subgroup) is not definable, one can consider the smallest type-definable subgroup, $H,$ containing the $[G,G].$ One can still show $H$ is normal.
It appears that Cassidy and Singer \cite{CassidySinger} need this fact for their lemma 2.24, since they did not know until \cite{Jindecomposability} that commutators are definable). I will offer a proof. Take $A \lhd G$ where there are no definability conditions on $A.$ Then, let $H$ be the smallest definable subgroup containing $A$ (differentially closed fields are $\omega$-stable, so we have the descending chain condition on definable groups). Now, consider the $G$-conjugates of $H,$ if $H$ is not normal. Since $A$ is normal, each of these is still a definable subgroup containing $A$. So, $H \cap H^g$ is a definable subgroup containing $A.$ But, this contradicts the minimality of $H.$ So, $H=H^g.$ 
\end{rem}

\begin{prop} Let $G$ and $H$ be isogenous $\alpha$-connected groups. Both are almost simple or neither is.  Both are commutative or neither is.
\end{prop}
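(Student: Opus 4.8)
The plan is to prove the two invariance statements separately, in each case using the isogeny in whichever direction is convenient and exploiting the subgroup correspondence $r$ together with its compatibility with inclusion and normality established in the previous proposition.

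For the preservation of \emph{almost simplicity}, recall that $G$ is almost simple precisely when there is no type-definable subgroup $H \leq G$ with $\tau_U(G/H) < \tau_U(G)$; equivalently, by the discussion following the central proposition, when $G$ has no proper type-definable subgroup with $\tau_U$ equal to $\tau_U(G)$ other than $G$ itself. I would phrase almost simplicity as: the only $\alpha$-connected subgroup $G_1 \leq G$ with $\tau_U(G_1) = \tau_U(G)$ is $G$ itself. Now suppose $G$ and $H=K$ are isogenous; apply the correspondence $r$ between the $\alpha$-connected subgroups of full $\tau_U$ of $G$ and those of $K$. Since $r$ is a bijection preserving the inclusion relation, any proper such subgroup $G_1 \lneq G$ would correspond to a subgroup $K_1 = r(G_1)$ of $K$; the key point is that $r$ sends the top group to the top group (since $G = \phi_G(H)$ and $K=\phi_K(H)$ both arise from $H$ itself), so a proper witness to non-almost-simplicity on one side produces a proper witness on the other. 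Thus $G$ is almost simple iff $K$ is, and by symmetry and transitivity of isogeny this extends from the amalgamating $H$ to the pair $G,H$ directly.

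For the preservation of \emph{commutativity}, the natural route is the contrapositive via Lemma~\ref{omit}. Suppose $G$ is non-commutative and strongly connected; then $[G,G]$ is type-definable and $\tau_U([G,G]) = \tau_U(G)$ by that lemma. Under an isogeny $\phi\colon G \to H$ the image of the commutator subgroup is $\phi([G,G]) = [\,\phi(G),\phi(G)\,] = [H,H]$ since $\phi$ is surjective. Because the kernel of $\phi$ has $\tau_U < \alpha$, the restriction of $\phi$ to $[G,G]$ does not drop the leading monomial, so $\tau_U([H,H]) = \tau_U(\phi([G,G])) = \tau_U([G,G]) = \alpha > 0$; in particular $[H,H]$ is nontrivial and $H$ is non-commutative. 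The symmetric argument (or passing through the amalgamating group of the isogeny) shows that if $H$ is non-commutative so is $G$, hence both are commutative or neither is.

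The main obstacle I anticipate is handling the fact that an abstract isogeny $\phi\colon G \to H$ is given only one-directionally, whereas Lemma~\ref{omit} and the correspondence $r$ are most cleanly stated for the symmetric amalgamation picture of Proposition~\ref{isogenyfiber}. The cleanest fix is to reduce to that picture: given that $G$ and $H$ are isogenous, invoke Proposition~\ref{isogenyfiber} to obtain a common $\alpha$-connected cover (or quotient) and run both arguments along the two legs, so that each invariant is compared through a single intermediate group rather than across an asymmetric map. I would also need to double-check that $\phi([G,G]) = [H,H]$ genuinely holds at the level of type-definable subgroups and not merely abstract subgroups — surjectivity of $\phi$ gives the abstract equality, and since commutators of a strongly connected group are type-definable by the $\alpha$-indecomposability theorem used in Lemma~\ref{omit}, the image is the correct type-definable object, so no definability subtlety arises.
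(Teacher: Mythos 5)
Your proposal is correct, and it is in fact more complete than the paper's own proof. For the commutativity half you take essentially the paper's route: the paper also reduces to the common-quotient diagram supplied by Proposition~\ref{isogenyfiber}, notes that $G$ commutative forces $K$ commutative (image of a commutative group), and then gets a contradiction by pushing the commutator of the non-commutative side forward along the isogeny, using Lemma~\ref{omit} for full $\tau_U$ of the commutator and the Lascar inequality with the small kernel to keep the image nontrivial. Your contrapositive organization ($G$ non-commutative $\Rightarrow K$ non-commutative $\Rightarrow H$ non-commutative) is the same calculation; the only cosmetic difference is that the paper works with the smallest definable subgroup containing the commutator rather than with $[G,G]$ itself. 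Where you genuinely diverge is the almost-simplicity half: the paper's printed proof is silent on it, whereas you derive it from the subgroup correspondence $r$ of the preceding proposition, using that $r$ is an inclusion-preserving bijection sending the top group to the top group, so a proper $\alpha$-connected subgroup of full $\tau_U$ on one side yields one on the other. That is a legitimate (and very likely the intended) argument, and it buys a proof of the full statement rather than half of it.

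One caveat deserves attention. Your asserted equivalence between ``there is no proper type-definable $H$ with $\tau_U(G/H)<\tau_U(G)$'' and ``there is no proper type-definable subgroup of full $\tau_U$'' is only half-justified: the Lascar inequality shows that a witness to the failure of the first condition has full $\tau_U$, hence witnesses the failure of the second, but the converse is not obvious — a proper subgroup of full $\tau_U$ need not have small coset-space rank (consider a factor of a product of two almost simple groups). The paper itself conflates these two formulations, and its operative notion, both here and in the Jordan--H\"{o}lder theorem, is the second one (every proper subgroup has smaller $\tau_U$), for which your argument via $r$ is sound; just be aware that if one insists on the quotient-rank formulation of Definition 1.1, your proof of the almost-simplicity half would need an additional step that neither you nor the paper supplies.
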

\begin{proof} We have the following diagram, since $G$ and $H$ are isogenous, 
$$\xymatrix{
G \ar[rd]^{\phi_G} & & H  \ar[ld]_{\phi_H}    \\
& K & 
}$$
$G$ commutative implies $K$ is commutative. 
Let $H_1$ be the smallest definable subgroup containing the commutator of $H.$  We know that $\tau_U(H_1)= \tau_U(H)$ by \ref{omit}. But, $\tau_U(ker \phi_H)<\tau_U(H_1),$ so the image is nontrivial. Further, it holds that the smallest definable subgroup containing the commutator of $K$ contains the image of $H_1.$ This is a contradiction, since ($K$ being commutative) the commutator is the identity.   
\end{proof}

The main reason for the notion of isogenous in this paper is to utilize it to prove uniqueness results of the form "up to isogeny" similar to the case of algebraic groups or differential algebraic groups. In particular, we will start, in the next section with a theorem similar to Baudisch's Jordan-H\"{o}lder style decomposition based on Berline-Lascar analysis of superstable groups. 

\section{Jordan-H\"{o}lder Theorems}
The proof of the following theorem follows the proof of the Jordan-H\"{o}lder theorem in the case of partial differential fields due to Cassidy and Singer. We should mention that though Lascar rank is \emph{not} the same as the notions of dimension that Cassidy and Singer use, it shares enough of the same properties to make the proofs work similarly after the correct translation of the statements is known. 
\begin{thm}
Let $G$ be an $\alpha$-connected superstable group. Then there exists a normal sequence $$1=G_r \lhd G_{r-1} \lhd \ldots \lhd G_1 \lhd G_0=G.$$
For each $i \in \{0, \ldots , r-1 \}:$ \begin{itemize}
\item $G_i$ is strongly connected and $\tau_U(G_i)=\tau_U(G).$ 
\item $RU(G_i)>RU(G_{i+1}).$
\item $G_i/G_{i+1}$ is almost simple and $\tau_U( G_i/G_{i+1})=\tau_U(G).$ $RU(G_i/G_{i+1})=\omega^\alpha \cdot (n-m),$ where $RU(G_i)=\omega^\alpha \cdot n$ and $RU(G_{i+1})=\omega^\alpha \cdot m.$  
\end{itemize}
If $$1=H_s \lhd H_{s-1} \lhd \ldots \lhd 
H_1 \lhd H_0=G$$
is another sequence which satisfies the above properties, then the sequences must be the same length ($r=s$). There is a permutation (call it $\sigma$)  of the indices so that the quotients are isogenous. That is, $G_{\sigma (i)}/ G_{\sigma (i) +1 }$ is isogenous to $H_i /H_{i+1}.$ 
\end{thm}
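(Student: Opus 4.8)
The plan is to prove this Jordan--H\"older style theorem in two distinct phases: first establish existence of a normal sequence with the stated properties, and then prove its uniqueness up to isogeny and reordering. For existence, I would build the sequence from the top down by repeatedly extracting maximal strongly connected normal subgroups of full $\tau_U$-rank. Given $G_i$ strongly connected with $\tau_U(G_i)=\tau_U(G)=\alpha$, I would let $G_{i+1}$ be a normal type-definable subgroup of $G_i$ that is strongly connected, has $\tau_U(G_{i+1})=\alpha$, and is maximal subject to $RU(G_{i+1})<RU(G_i)$ (equivalently, with the largest possible leading coefficient strictly below that of $G_i$). The quotient $G_i/G_{i+1}$ is then strongly connected by Proposition~\ref{imageconnected}, has $\tau_U=\alpha$ by the Lascar inequality, and is almost simple precisely because of the maximality of $G_{i+1}$: any normal subgroup of the quotient dropping $\tau_U$ would pull back to a normal subgroup of $G_i$ contradicting maximality. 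Since the leading coefficient $n$ in $RU(G_i)=\omega^\alpha\cdot n+\beta$ strictly decreases at each step and is a natural number, the process terminates, giving finite length $r$ and the rank equation $RU(G_i/G_{i+1})=\omega^\alpha\cdot(n-m)$ by the Lascar inequality applied to the short exact sequence.

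For uniqueness, the natural strategy is the classical Schreier refinement argument adapted to this setting. Given two sequences $\{G_i\}$ and $\{H_j\}$, I would form the doubly-indexed intersection groups $G_{i,j}=G_{i+1}\cdot(G_i\cap H_j)$, take their $\alpha$-connected components, and run the Zassenhaus (butterfly) lemma to produce a common refinement. The key isogeny-theoretic input is the Zassenhaus isomorphism, which I would replace by an \emph{isogeny} statement: the two subquotients arising in the butterfly lemma are isogenous rather than isomorphic, using the third bullet of the preceding Proposition on isogeny (that normal subquotients of isogenous configurations are isogenous, and that $G_2/G_1$ is isogenous to $K_2/K_1$ under the correspondence $r$). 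The almost-simplicity of the factors $G_i/G_{i+1}$ forces each refinement step to either leave a factor unchanged or collapse it to rank $<\omega^\alpha$, so after discarding the collapsed (non-full-$\tau_U$) factors, the surviving factors of the two refinements match up in a length-preserving way, yielding $r=s$ and the permutation $\sigma$ with $G_{\sigma(i)}/G_{\sigma(i)+1}$ isogenous to $H_i/H_{i+1}$.

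The hard part will be making the Zassenhaus/butterfly lemma work \emph{up to isogeny} in the superstable setting, where the naive intersections and products of type-definable groups need not be strongly connected and where one must consistently pass to $\alpha$-connected components while controlling the rank defects. Specifically, the products $G_{i+1}\cdot(G_i\cap H_j)$ are only type-definable via the $\alpha$-indecomposability theorem of \cite{BerlineLascar}, and one must verify that replacing each such group by its $\alpha$-connected component changes the relevant subquotients only by subgroups of rank $<\omega^\alpha$ — that is, only up to isogeny. I would handle this by systematically invoking the ``rank of the kernel is small'' principle together with the Lascar inequality to show that every map in the butterfly diagram is surjective with small kernel, hence an isogeny in the sense of our definition, and that the isogeny equivalence relation (established to be transitive in the earlier Proposition) lets these local isogenies compose into a global matching of factors.

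Secondary technical points, which I expect to be routine once the framework above is in place, include: checking that all the groups in the refinement genuinely have $\tau_U=\alpha$ (so that ``isogeny'' is even defined for them), verifying normality of the $\alpha$-connected components using that they are characteristic subgroups, and confirming that almost-simple factors cannot be properly refined — any attempted refinement of $G_i/G_{i+1}$ produces a normal subgroup which, being non-central or of full rank, must by almost-simplicity be trivial or of cofinite $\tau_U$-defect, so the corresponding factor in the refinement is isogenous to the original.
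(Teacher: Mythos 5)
Your proposal follows essentially the same route as the paper: existence by repeatedly extracting a normal subgroup of full leading monomial with maximal leading coefficient (so that maximality makes each quotient almost simple and the strictly decreasing coefficient forces termination), and uniqueness by Schreier refinement with the very same groups $G_{i,j}=G_{i+1}(H_j\cap G_i)$, the Zassenhaus/butterfly lemma, and a Lascar-inequality count showing each almost simple factor contributes exactly one full-$\tau_U$ subquotient. The only real divergence is that what you flag as ``the hard part'' --- passing to $\alpha$-connected components of the $G_{i,j}$ and making Zassenhaus work only up to isogeny --- is avoided in the paper's proof: almost simplicity of $G_i/G_{i+1}$ forces $G_{i,0}=\cdots=G_{i,j}=G_i$, so the unique surviving factor is a quotient of $G_i/G_{i+1}$ and hence automatically strongly connected, while the cross-refinement factors are definably isomorphic (not merely isogenous) by the butterfly lemma, with isogeny needed only for the projection $G_i/G_{i+1}\rightarrow G_{i,j}/G_{i,j+1}$.
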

\begin{proof} If $G$ is already almost simple, then there is nothing to do. If this is not the case, then there is a nonempty collection of definable normal subgroups $H$ of $G$ with $\tau_U(G)=\tau_U(H).$ Pick any such $H$ so that if $RU(H)=\omega^\alpha \cdot n + \beta,$ then there is no other $H_1$ in the collection so that $RU(H)=\omega^\alpha \cdot n_1 + \beta_1,$ where $n_1>n.$ We let $G_1$ be the $\alpha$ connected component of $H, \,$ $G_1=H^{(\alpha )}.$ $G_1$ is a characteristic subgroup of $H \lhd G$, so $G_1 \lhd G.$ Since the Cantor sum on ordinals is equal to the sum when the ordinals in question are monomials,  $RU(G/G_1)=\omega^\alpha \cdot (n-m),$ where $RU(G)=\omega^\alpha \cdot n$ and $RU(G_1)=\omega^\alpha \cdot m.$ Suppose that the quotient $G/G_1$ is not almost simple. Then, there is a definable normal subgroup $H_1 \lhd G/G_1$ with $\tau_U(G)=\tau_U(G/G_1)=\tau_U(H_1).$ But, then the preimage of $H_1$ under the quotient map is a subgroup of $G$ which violates the maximality condition with which $H$ was chosen, namely, the leading monomial of the Lascar rank of the preimage of $H_1$ is larger than that of $H.$ So, the quotient is almost simple. 

From here, the proof proceeds in a similar manner to the proof of Cassidy-Singer decomposition in the differential field context. In turn, that proof follows the one in \cite{LangAlgebra} Lang Chapter 1 section 3. So, suppose we have two sequences as above $ \langle G_i \rangle _{i \leq r}$ and $\langle H_j \rangle _{j \leq s}.$ For each pair $(i,j)$ with $i<r$ and $j<s,$ we define: 
$$G_{i,j}:=G_{i+1} (H_j \cap G_i).$$
Notation: 
$$G_{i,s}:=G_{i+1,0}$$ 
Then, $$1 \lhd G_{r-1, s-1} \lhd G_{r-1, s-2} \lhd \ldots \lhd G_{r-1} \lhd G_{r-2, s-1} \lhd \ldots G_1 \lhd G_{0,s-1} \lhd \ldots G_{0,0}=G. $$ Of course, one can apply the definition in the opposite way as well, so get a refinement of $\langle H_j \rangle,$ 
$$H_{j,i}= H_{j+1} (G_i \cap H_j).$$ 
By 3.3 from Lang's algebra, $G_{i,j}/G_{i,j+1}$ is isomorphic to $H_{j,i}/H_{j,i+1}.$ Further, the isomorphism is definable. 
Claim: For and $i=0 \ldots r-1,$ there is precisely one $j$ so that $\tau_U(G_{i,j}/G_{i,j+1})= \tau_U(G).$ Further, for this specific value of $j,$ we have that $G_{i,j}/G_{i,j+1}$ is isogenous to $G_i/G_{i+1}.$ Then, since the symmetric statement holds for the $H_{j,i}$ we know that $r=s$ and the theorem follows. So, we prove the claim. By the Lascar inequality, 
$$ RU(G_{i+1}) + \sum_{j=s-1}^{0} RU(G_{i,j}/G_{i,j+1}) \leq RU(G_i) \leq RU(G_{i+1}) \oplus \bigoplus_{j=s-1}^{0} RU(G_{i,j}/G_{i,j+1})  $$
So, for some $j,$ $$\tau_U(G_{i,j}/G_{i,j+1})=\alpha.$$ Now, let $j$ be minimal so that the condition holds.  $$\tau_U(G)=\tau_U(G_{i,j}/G_{i,j+1}) \leq \tau_U(G_{i,j}/G_{i+1}) \leq \tau_U(G_i/G_{i+1}) =\tau_U(G).$$ 
Then note that for each $k<j,$ $$\tau_U(G_{i,j}/G_{i+1}) \leq \tau_U(G_{i,k}/G_{i+1} \leq \tau_U(G_i/G_{i+1}).$$
Thus, for all $K<j,$ $\tau_U(G_{i,k}/G_{i+1})=\tau_U(G).$ But, we know that $\tau_U(G_{i,k}/G_{i,k+1})<\alpha.$ However, we know that $G_i/G_{i+1}$ is $\alpha$-connected. But, this forces $G_{i,0}=G_i=G_{i,1}.$ Continuing in the same way, we can see $$G_{i,0}= \ldots = G_{i,j}.$$ We have the canonical projection map $$G_i/G_{i+1} \rightarrow G_{i,j}/ G_{i,j+1}.$$ The kernel is a proper normal subgroup of an almost simple group, so the map is an isogeny. Now, suppose that for some $t>j,$ we have that $\tau_U(G_{i,t}/G_{i,t+1})=\tau_U(G).$ Then $\tau_U(G_{i,t}/G_{i+1})=\tau_U(G).$ This contradicts the almost simplicity of $G_i/G_{i+1}.$ So, we have the desired uniqueness result. 
\end{proof}

\begin{rem}
In differential fields, there are examples, due to Cartan,  Cassidy and Singer, which show that some sort of weaker notion of correspondence than isomorphism is necessary for these sort of theorems to be true. We discuss the model theoretic aspects of an example of Cassidy and Singer here. We work in a differentially closed field with two distinguished derivations $\{\delta_1 , \delta_2 \}.$ Let $a$ be such that $\delta_1 (a)=1$ and $\delta_2 (a) =0.$ Let $G_1$ be the zero set of $(\delta_1^2 z - \delta_2z ).$ Let $G_2$ be the zero set of $(\delta_1 z-a\delta_2 z).$ We consider these groups as subgroups of the additive group. By work of Sonat Suer, \cite{Suerthesis}, both of these differential algebraic groups have Lascar rank $\omega.$ One can quickly see that $\omega$ is a lower bound for the Lascar rank, because the solution sets to the equations are infinite dimensional vector spaces over the fied of absolute constants. Seeing that $\omega$ is an upper bound takes slightly more work. We will work with a slightly more general class of examples.

Lets show $Z(\delta_1 y - f(y))$ where $f \in K[\delta_2 ]$ is almost simple. A proper subgroup of the additive group must be defined by a linear operator $g \in K[\Delta ]$ such that $g(y) \notin \{ \delta _1  y - f (y) \}$ in $K\{z\}$ and such that $H \subset \{y \in G \,  | \, g(y) = 0 \}.$ We may assume that $g \in K[\delta_2]$. If g has order $d,$ then for any $y  \in  H,$ we have $k(y,\delta_2 y,\delta_2 y,...) = k(y,\delta_2 y,...,\delta_2^{d-1} y)$. So, $H$ has $\Delta$-type 0, which implies finite Lascar rank.

$G=G_1+G_2$ is strongly connected and the series decomposition as above may be given $1 \lhd G_1 \lhd G$ or $1 \lhd G_2 \lhd G.$ Cassidy and Singer \cite{CassidySinger} show that $G_1$ is not isomorphic to either $G_2$ or $G/G_2.$ However, $G_1$ is isogenous to $G/G_2.$

\end{rem}

\begin{rem} All currently known non-commutative almost simple differential algebraic groups are actually have finite center. Such groups are, by \cite{Jindecomposability}, the perfect central extensions of the $C'$ points of an algebraic group. That is any almost simple $G$ has the following exact sequence: 
$$1 \rightarrow Z(G) \rightarrow G \rightarrow H \rightarrow 1$$ 
Now, in an arbitrary superstable theory $T$ work with an arbitrary definable perfect central extension of an algebraic group $H$, which is almost simple. Is $G$ a finite extension of $H?$ The assumptions are weak enough so that one might guess that the answer is no. However, examples which show the negative conclusion would be of interest if they could be translated to differential fields.
\end{rem}

There are suitable theories of numerical polynomials in other algebraic settings from which a theory similar to that of Cassidy and Singer might be developed. An example of model theoretic interest is the setting of difference-differential fields \cite{Medina}. In that setting, there is no notrivial lower bound Lascar rank in terms of the appropriate generalization of differential gauge (there are definable sets of Morley rank one with infinite difference-differential transcendence degree). The results in this paper would have to be generalized to the supersimple setting in order to compare potential model theoretic and algebraic notions of strong connectedness.

\bibliography{Research}{}

\begin{thebibliography}{1}

\bibitem{Baudisch}
Andreas Baudisch.
\newblock On superstable groups.
\newblock {\em Journal of the London Mathematical Society}, 42:452--464, 1990.

\bibitem{BerlineLascar}
Chantal Berline and Daniel Lascar.
\newblock Superstable groups.
\newblock {\em Annals of Pure and Applied Logic}, 30:1--43, 1986.

\bibitem{CassidySinger}
Phyllis~J. Cassidy and Michael~F. Singer.
\newblock A {J}ordan-{H}\"{o}lder theorem for differential algebraic groups.
\newblock {\em Journal of Algebra}, 328:190--217, 2011.

\bibitem{Jindecomposability}
James Freitag.
\newblock Indecomposability in partial differential fields.
\newblock {\em Under Review, Transactions of the American Mathematical Society,
  http://arxiv.org/abs/1106.0695}.

\bibitem{LangAlgebra}
Serge Lang.
\newblock {\em Algebra, 3rd edition}.
\newblock Addison–Wesley, New York, 1993.

\bibitem{Medina}
Ronald~Bustamante Medina.
\newblock Rank and dimension in difference-differential fields.
\newblock {\em Preprint}.

\bibitem{Poizat}
Bruno Poizat.
\newblock {\em Stable Groups}.
\newblock Mathematical Surveys and monographs, volume 87, American Mathematical
  Society, 1987.

\bibitem{Suerthesis}
Sonat Suer.
\newblock {\em Model theory of differentially closed fields with several
  commuting derivations}.
\newblock PhD thesis, University of Illinois at Urbana-Champaign, 2007.

\end{thebibliography}
\bibliographystyle{plain}
\end{document}